\newcommand{\R}{{\mathbb R}}
\newcommand{\abs}[1]{\left\vert#1\right\vert}
\newcommand{\pair}[1]{\left\langle#1\right\rangle}
\newcommand{\set}[1]{\left\{#1\right\}}
\newcommand{\ind}[1]{\mathbf{1}_{#1}}
\newcommand{\ex}[1]{\mathsf{E}\left[#1\right]}
\numberwithin{equation}{section} \theoremstyle{plain}
\newtheorem{theorem}{Theorem}[section]
\newtheorem{lemma}[theorem]{Lemma}
\theoremstyle{definition}
\theoremstyle{remark}
\newtheorem{remark}[theorem]{Remark}
\theoremstyle{acknowledgement}
\newtheorem{acknowledgement}[theorem]{Acknowledgement}
\begin{document}
\title[Maximum likelihood drift estimation]
{Maximum likelihood drift estimation for the mixing of two  fractional Brownian motions}
\author{Y. Mishura}%

%\subjclass[2010]{}
%\date{}
%\keyword{}%
%\thanks{}

% ----------------------------------------------------------------

\maketitle
% ----------------------------------------------------------------
\begin{abstract}
We construct the maximum likelihood estimator (MLE) of the unknown drift parameter $\theta\in \R$  in the linear   model
$$X_t=\theta t+\sigma B^{H_1}(t)+B^{H_2}(t),\;t\in[0,T],$$
where $B^{H_1}$ and $B^{H_2}$ are two independent fractional Brownian motions with Hurst indices $\frac12<H_1<H_2<1.$ The formula for MLE is based on the solution of the integral equation with weak polar kernel.
 %We establish the rate of convergence of MLE to a true value of parameter and study the ``critical'' situations when $H_1$ approaches $\frac12$ and when $H_2$  approaches $H_1$.
\end{abstract}

%\begin{keywords}
%Fractional
%Brownian motion, maximum likelihood estimate, integral equation, consistency
%\end{keywords}

%\begin{classcode}
%60G22, 62F10, 62F12, 62G12
%\end{classcode}

\section{Introduction. The elements of stochastic calculus   for fBm}\label{intro}
Consider the continuous-time linear model $$X(t)=\theta t+\sigma B^{H_1}(t)+ B^{H_2}(t),\;t\in[0,T],$$
where $B^{H_1}$ and $B^{H_2}$ are two independent fractional Brownian motions with Hurst indices $\frac12<H_1<H_2<1,$ $\sigma>0.$ Given the sample path of $X$ on $[0,T]$ it is required to estimate the unknown drift parameter $\theta\in \R$. In the case when $H_1=\frac12$ the problem was  solved in \cite{CaiChigKlept} where the authors develop the basic tools for analysis of the mixed fBm  based on the
filtering theory of Gaussian processes. They considered the
linear regression setting and demonstrated  how the maximum likelihood estimator   can be
constructed and studied in the large sample asymptotic regime.

As the preliminaries,  we give some basic facts about stochastic calculus for fractional Brownian motion (for more details see \cite{BiHuOksZhang}, \cite{Decr}, \cite{Jost}, \cite{Mi}, \cite{Nu}). Parameter estimation for different models with long memory was studied, among others,   in \cite{betotu}, \cite{bishwal}, \cite{HuNu}, \cite{prakasa rao}.
Let $(\Omega, \mathfrak{F}, P)$ be a complete probability space  supporting all stochastic processes considered in what follows. Introduce $\set{B^H(t),t\ge 0}$,  an adapted  fractional Brownian motion with Hurst parameter $H\in(1/2,1)$, i.e. a centered Gaussian process with the covariance function
$$
\ex{B^H(t)B^H(s)}=\frac12\left(t^{2H}+s^{2H}-\abs{t-s}^{2H}\right).
$$
For a (possibly infinite) interval $[0,T]$, denote by  $L^2_H[0,T]$ the completion of the space of simple functions $f\colon [0,T]\to\R$ with respect to the scalar product
$$
\pair{f,g}_{H}^2:= \alpha_H\int_0^T\int_0^T f(t)g(s)\abs{t-s}^{2H-2}dsdt,
$$
where $\alpha_H = H(2H-1)$. (It is worth to mention that this completion contains not only classical functions, but also some  distributions.)
For a step function of the form
$$
f(t) = \sum_{k=0}^{n-1}a_k \ind{[t_k,t_{k+1})},
$$
where $\set{t_0<t_1<\dots<t_n}\subset [0,T]$, the integral $I^H(f)$ of $f$ with respect to $B^H$ is defined by
$$
I^H(f)=\int_{0}^T f(t) dB^H(t) = \sum_{k=0}^{n-1} a_k \left(B^H(t_{k+1})-B^H(t_k)\right).
$$
It can be verified that $I^H$ maps isomorphically the space of step functions on $[0,T]$ with the scalar product $\pair{\cdot,\cdot}_H$  into $L^2(\Omega)$, hence, $I^H$ can be extended to  $L^2_H[0,T]$.

Now define a square integrable kernel
$$
K_{H}(t,s) = \beta_H s^{1/2-H}\int_s^t (u-s)^{H-3/2}u^{H-1/2}du,
$$
where $\beta_H = \big(\frac{\alpha_H}{\mathrm{B}(H-1/2,2-2H)}\big)^\frac12,$ where $B(\cdot,\cdot)$ is beta function.
The map
$$
(K^*_H f)(s) = \int_{s}^{T} f(t) \partial_t K(t,s)dt =
\beta_H s^{1/2-H} \int_{s}^{T} f(t) t^{H-1/2}(t-s)^{H-3/2}dt
$$
is an isometry between the space of step functions and and can be extended to a Hilbert space isomorphism between $L^2_H[0,T]$ and
$L^2[0,T]$. This implies that the process
$$
W(t) = I^H\left((K^*_H)^{-1}\ind{[0,t]}\right)
$$
is a standard Wiener process on $[0,T]$, moreover,
for any $f\in L^2_H[0,T]$,
\begin{equation}\label{IH}
I^H(f) = \int_0^T (K^*_H f)(s)dW(s).
\end{equation}
In particular, putting in the last formula $f=\ind{[0,t]}$, one gets the following well-known representation of $B^H$:
$$
B^H(t)=\int_{0}^{t} K_H(t,s)dW(s).
$$
Finally, we define the so-called fundamental martingale, or Molchan martingale $M^H$, for $B^H$. In this order, introduce the kernel $l_H(t,s)=(t-s)^{1/2-H}s^{1/2-H}$ and consider square-integrable Gaussian martingale
$$M^H(t) = \int_0^t l_H(t,s)dB^H (s) = \gamma_H \int_0^t s^{1/2-H} dW(s),
$$
with
$\gamma_H = (2H(\frac32-H) \Gamma(3/2-H)^3\Gamma(H+\frac12)\Gamma(3-2H)^{-1})^{\frac12}$; the last equality is due to \eqref{IH}.

The paper is organized as follows. In Section 2 we reduce the main problem to the solution of the integral equation with the weak polar kernel and establish the existence-uniqueness result for this equation. Section 3 contains an auxiliary result concerning the existence and uniqueness of the solution of the corresponding integral equation of the 1st kind. We prove this fact directly, by constructing the unique solution.

\section{Main problem}

Now, let $\frac12<H_1<H_2<1,$ $\set{\widetilde{B}^{H_1}(t), B^{H_2}(t),\;t\ge 0}$, $i=1,2$, be two processes defined on the space $(\Omega, \mathfrak{F}, (\mathfrak{F})_t)$ and $P_\theta$ be a probability measure under which $\widetilde{B}^{H_1}$ and $B^{H_2}$ are independent,  $B^{H_2}$ is a   fractional Brownian motion  with Hurst parameter  $H_2$, and $\widetilde{B}^{H_1}$
is a fractional Brownian motion with Hurst parameter $H_1$  and with drift $\frac\theta\sigma$, i.e.,
 $$\sigma\widetilde{B}^{H_1}(t)=\theta t+\sigma{B}^{H_1}(t).$$

The probability measure $P_0$ corresponds to the case when $\theta=0$. Our main problem is the construction of maximum likelihood estimator for $\theta\in\R$ by the observations of the process $Z(t)=\theta t+\sigma B^{H_1}(t)+ B^{H_2}(t),\;t\in[0,T].$ However, the form of the process $Z$ (two fBm's with different Hurst indices) does not allow to construct the estimator immediately. To simplify the construction, we apply to $Z$ the linear transformation of the following form:
\begin{equation}\begin{gathered}\label{lin.trans}Y(t)=\int_0^tl_{H_1}(t,s)dZ(s)=\theta B\Big(\frac32-H_1,\frac32-H_1\Big)t^{2-2H_1}+\sigma M^{H_1}(t)\\+\int_0^t l_{H_1}(t,s)dB^{H_2}(s). \end{gathered}\end{equation}
this process is preferable since it involves Gaussian martingale $M^H$.
\begin{lemma}
The linear transformation \eqref{lin.trans} is correctly defined.
\end{lemma}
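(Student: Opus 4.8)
The plan is to show that each of the three summands on the right-hand side of \eqref{lin.trans} is a well-defined, finite, square-integrable object, so that $Y(t)$ is for every fixed $t\in(0,T]$ a bona fide Gaussian random variable. Concretely, it suffices to check that the kernel $s\mapsto l_{H_1}(t,s)=(t-s)^{1/2-H_1}s^{1/2-H_1}$, restricted to $(0,t)$, (i) is Lebesgue integrable, (ii) belongs to $L^2_{H_1}[0,t]$, and (iii) belongs to $L^2_{H_2}[0,t]$. Claim (i) is immediate: since $\tfrac12-H_1\in(-\tfrac12,0)\subset(-1,0)$, the substitution $s=tu$ gives $\int_0^t l_{H_1}(t,s)\,ds = t^{2-2H_1}\,\mathrm{B}(\tfrac32-H_1,\tfrac32-H_1)<\infty$, which also identifies the explicit deterministic term $\theta\,\mathrm{B}(\tfrac32-H_1,\tfrac32-H_1)\,t^{2-2H_1}$ appearing in \eqref{lin.trans}.

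For (ii) and (iii) I would invoke the classical continuous embedding $L^{1/H}[0,T]\hookrightarrow L^2_H[0,T]$, valid for every $H\in(1/2,1)$ (see, e.g., \cite{Mi}). Thus it is enough to verify that $l_{H_1}(t,\cdot)\in L^{1/H_1}[0,t]$: raising the kernel to the power $1/H_1$ produces the exponent $(\tfrac12-H_1)/H_1=\tfrac{1}{2H_1}-1$, which lies in $(-\tfrac12,0)\subset(-1,0)$ because $H_1\in(1/2,1)$, and hence $\int_0^t\big((t-s)^{1/2-H_1}s^{1/2-H_1}\big)^{1/H_1}ds = t^{1/H_1-1}\,\mathrm{B}\big(\tfrac{1}{2H_1},\tfrac{1}{2H_1}\big)<\infty$. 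This gives (ii). For (iii) note that, $[0,t]$ being a finite interval and $H_1<H_2$ forcing $1/H_2\le 1/H_1$, one has $L^{1/H_1}[0,t]\subset L^{1/H_2}[0,t]$, so $l_{H_1}(t,\cdot)\in L^{1/H_2}[0,t]\subset L^2_{H_2}[0,t]$ as well.

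With these three facts in hand, the Wiener-type integrals $\int_0^t l_{H_1}(t,s)\,dB^{H_1}(s)$ and $\int_0^t l_{H_1}(t,s)\,dB^{H_2}(s)$ are well-defined elements of $L^2(\Omega)$ in the sense of the isometries $I^{H_1}$, $I^{H_2}$ recalled in Section~\ref{intro}, the first coinciding by construction with the fundamental (Molchan) martingale $M^{H_1}(t)$, so that the middle summand equals $\sigma M^{H_1}(t)$; together with (i) this establishes that the right-hand side of \eqref{lin.trans}, and hence $Y(t)$, is correctly defined, Gaussian, and — since $l_{H_1}(t,\cdot)$ is supported on $[0,t]$ and the integrals are $L^2$-limits of linear combinations of increments of $Z$ over subintervals of $[0,t]$ — adapted to $(\mathfrak F_t)$. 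The only genuinely delicate point is (iii): integrating a kernel tailored to the index $H_1$ against the fractional Brownian motion $B^{H_2}$ carrying a different index. This is exactly where the hypothesis $H_2>H_1$ is used — the larger Hurst index makes the relevant $L^p$-space larger on the finite horizon and thus automatically accommodates the kernel $l_{H_1}$.
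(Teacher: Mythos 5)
Your proof is correct, but it takes a different route from the paper's. The paper disposes of the drift and $M^{H_1}$ terms as already known from Section~\ref{intro} and concentrates on the only delicate term, $\int_0^t l_{H_1}(t,s)\,dB^{H_2}(s)$; there it argues completely elementarily, using the pointwise bound $\abs{u-s}^{2H_2-2}\le t^{2H_2-2H_1}\abs{u-s}^{2H_1-2}$ valid on $[0,t]$ (since $H_2>H_1$ and $\abs{u-s}\le t$) to dominate $\norm{l_{H_1}(t,\cdot)}_{H_2}^2$ by $\frac{\alpha_{H_2}}{\alpha_{H_1}}\,t^{2H_2-2H_1}\norm{l_{H_1}(t,\cdot)}_{H_1}^2$, and then identifying the latter norm with $\ex{\abs{M^{H_1}(t)}^2}=\frac{\gamma_{H_1}^2}{2-2H_1}t^{2-2H_1}$, which yields an explicit finite bound. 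You instead verify membership of $l_{H_1}(t,\cdot)$ in $L^2_{H_1}[0,t]$ and $L^2_{H_2}[0,t]$ via the Hardy--Littlewood type embedding $L^{1/H}[0,T]\hookrightarrow L^2_H[0,T]$ together with the inclusion $L^{1/H_1}[0,t]\subset L^{1/H_2}[0,t]$ on a finite interval; the exponent computations ($\frac{1}{2H_1}-1>-1$, the beta-function identities) are correct, and the embedding you invoke is indeed available in \cite{Mi}, so the argument stands. The trade-off: the paper's argument is self-contained, uses the hypothesis $H_2>H_1$ only through the trivial kernel comparison, and gives the exact value of the dominating norm, whereas yours leans on a nontrivial (though standard) embedding theorem but is more systematic, covering all three summands at once; note only that your closing remark slightly oversells the role of $H_2>H_1$ --- it is where your particular chain of inclusions uses it, but a direct check of the exponent $(\tfrac12-H_1)/H_2>-1$ would give $l_{H_1}(t,\cdot)\in L^{1/H_2}[0,t]$ even without that ordering, so the hypothesis is a convenience of the route rather than a necessity for this lemma.
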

\begin{proof}

It is sufficient to establish the existence of the integral $\int_0^t l_{H_1}(t,s)dB^{H_2}(s)$ for any $t\in[0,T]$. But we have that for any $u,s\in[0,t]$ $$|u-s|^{2H_2-2}\leq t^{2H_2-2H_1}|u-s|^{2H_1-2},$$
therefore\begin{equation*}\begin{gathered}
||l_{H_1}(t,\cdot)||_{H_2}^2:= \alpha_{H_2}\int_0^t\int_0^t l_{H_1}(t,s)l_{H_1}(t,u)\abs{u-s}^{2{H_2}-2}dsdu\\\leq \alpha_{H_2}t^{2H_{2}-2H_1}\int_0^t\int_0^t l_{H_1}(t,s)l_{H_1}(t,u)\abs{u-s}^{2{H_1}-2}dsdu \\=\alpha_{H_2}t^{2H_{2}-2H_1}||l_{H_1}(t,\cdot)||_{H_1}^2=\alpha_{H_2}t^{2H_{2}-2H_1}E|M^{H_1}(t)|^2\\
=\frac{\alpha_{H_2}\gamma^2_{H_1}}{2-2H_1}t^{2H_{2}-4H_1+2}<\infty,
\end{gathered}\end{equation*}
whence the proof follows.
\end{proof}

As it was mentioned, process $Y$ is more convenient to deal with since it involves martingale with a drift. Furthermore, it follows from the next result that processes $Z$ and $Y$ are observed simultaneously, so, we can reduce the original problem to the equivalent problem of  the construction of maximum likelihood estimator of $\theta\in\R$ basing on  the linear transformation $Y$.
\begin{lemma} Processes $Z$ and $Y$ are observed simultaneously.
\end{lemma}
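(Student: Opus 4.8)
The plan is to show that observing the path $Z|_{[0,T]}$ and observing the path $Y|_{[0,T]}$ carry the same information, i.e.\ that for every $t\in[0,T]$ one has $\sigma\big(Z(s):s\le t\big)=\sigma\big(Y(s):s\le t\big)$ (equivalently $\mathcal{F}^Z_t=\mathcal{F}^Y_t$); this is precisely the assertion that the two processes are observed simultaneously. One inclusion is immediate: by \eqref{lin.trans} and the previous lemma, for each $s\le t$ the random variable $Y(s)=\int_0^s l_{H_1}(s,u)\,dZ(u)$ is a measurable functional of $Z|_{[0,s]}$, so $\mathcal{F}^Y_t\subseteq\mathcal{F}^Z_t$. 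The real content is the reverse inclusion, namely that $Z$ can be reconstructed from $Y$.

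To get it, I would invert the linear transformation. Let $\mathcal{L}$ denote the deterministic Volterra operator $(\mathcal{L}f)(t)=\int_0^t l_{H_1}(t,u)\,df(u)$ with the Molchan kernel $l_{H_1}(t,u)=(t-u)^{1/2-H_1}u^{1/2-H_1}$, so that $Y=\mathcal{L}Z$ by \eqref{lin.trans}; note that $\mathcal{L}$ acts on the whole path $Z$, not on its decomposition into drift and the two fBm's, so by linearity of $\mathcal{L}^{-1}$ the drift $\theta t$ is recovered together with the rest. Now $\mathcal{L}$ is exactly the operator sending a fractional Brownian motion to its fundamental martingale, $\mathcal{L}B^{H_1}=M^{H_1}$, and it is classically injective with a lower-triangular (causal) inverse: combining the representations of Section~\ref{intro}, $\mathcal{L}^{-1}$ acts, up to the normalising constant and the usual fractional-calculus rewriting, as $g\mapsto\gamma_{H_1}^{-1}\int_0^{\cdot}K_{H_1}(\cdot,s)\,s^{H_1-1/2}\,dg(s)$ — applied to $M^{H_1}$, using $dM^{H_1}(s)=\gamma_{H_1}s^{1/2-H_1}\,dW(s)$, this returns $\int_0^{\cdot}K_{H_1}(\cdot,s)\,dW(s)=B^{H_1}$, cf.\ \cite{Jost}, \cite{Mi}, \cite{Nu}. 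Hence $Z=\mathcal{L}^{-1}Y$, and since $\mathcal{L}^{-1}$ is measurable and causal, $Z(s)$ for $s\le t$ is a functional of $Y|_{[0,t]}$, giving $\mathcal{F}^Z_t\subseteq\mathcal{F}^Y_t$; combined with the first paragraph this yields $\mathcal{F}^Z_t=\mathcal{F}^Y_t$ and proves the lemma.

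The main obstacle is to make $Z=\mathcal{L}^{-1}Y$ rigorous, since $Y$ is a Gaussian process which is in general not a semimartingale, so the symbol ``$dY$'' in the formula for $\mathcal{L}^{-1}$ needs a careful reading. I would avoid inverting $\mathcal{L}$ on an abstract path space and instead argue by a stochastic Fubini theorem: substitute $Y=\mathcal{L}Z$ termwise via \eqref{lin.trans}, interchange the outer kernel of $\mathcal{L}^{-1}$ with the three integrals defining $Y$ — admissible because the relevant integrability conditions hold by the estimates used in the proof of the previous lemma — and thereby reduce everything to the purely deterministic kernel composition identities encoding $\mathcal{L}^{-1}\mathcal{L}=\mathrm{id}$, which are the standard composition formulas for the Molchan and Mandelbrot--Van Ness kernels (already implicit in the representations $B^H(t)=\int_0^tK_H(t,s)\,dW(s)$ and $M^H(t)=\gamma_H\int_0^t s^{1/2-H}\,dW(s)$ of Section~\ref{intro}; see also \cite{Jost}, \cite{Mi}, \cite{Nu}). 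As an alternative I would observe that the paths of $Z$ are a.s.\ H\"older continuous of every order $\gamma<H_1$, hence lie in a space $C^\gamma[0,T]$ with $\gamma>1/2$ on which $\mathcal{L}$ and $\mathcal{L}^{-1}$ are mutually inverse continuous operators in the Young sense; but the Fubini route looks shorter and dispenses with pathwise regularity estimates.
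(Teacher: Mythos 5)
Your proposal is correct and takes essentially the same route as the paper: the paper also reduces the lemma to reconstructing $Z$ from $Y$ by composing with the kernel $\int_s^t(u-s)^{H_1-3/2}u^{H_1-1/2}\,du$ (which is, up to a constant, $s^{H_1-1/2}K_{H_1}(t,s)$, i.e.\ your $\mathcal{L}^{-1}$) and justifies the interchange of integrals by the Fubini theorem for fBm integrals (Theorem 2.6.5 of \cite{Mi}). The paper simply carries out explicitly the deterministic kernel-composition computation that you defer to the stochastic Fubini step, arriving at $Z(t)=B\big(H_1-\tfrac12,\tfrac32-H_1\big)^{-1}\int_0^t\int_s^t(u-s)^{H_1-3/2}u^{H_1-1/2}\,du\,dY(s)$.
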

\begin{proof} Taking into account \eqref{lin.trans}, it  is enough to present $Z$ via $Y$. But it follows from \eqref{lin.trans}, from   Fubini theorem for  integrals w.r.t fBm (Theorem 2.6.5 \cite{Mi}), and from elementary integral transformations, that for any $t\in[0,T]$
\begin{equation*}\begin{gathered} \int_0^t(t-s)^{H_1-\frac32} \int_0^sl_{H_1}(s,u)dZ(u) ds=\int_0^tu^{\frac12-H_1}\int_u^t(t-s)^{H_1-\frac32}(s-u)^{\frac12-H_1}dsdZ(u)\\
=B\Big(H_1-\frac12,\frac32-H_1\Big)\int_0^tu^{\frac12-H_1}dZ(u)=\int_0^t(t-s)^{H_1-\frac32}Y(s)ds\\
=\Big(H_1-\frac12\Big)^{-1}\int_0^t (t-s)^{H_1-\frac12}dY(s),
\end{gathered}\end{equation*}
whence \begin{equation*}\begin{gathered} Z(t)=B\Big(H_1-\frac12,\frac32-H_1\Big)^{-1}\int_0^t\int_s^t (u-s)^{H_1-3/2}u^{H_1-1/2}du dY(s),
\end{gathered}\end{equation*}
and the proof follows.
\end{proof}
Denote for simplicity $\mathcal{B}_{H_1}:=B\Big(\frac32-H_1,\frac32-H_1\Big)$. Now the main problem can be formulated as follows.  Let $\frac12<H_1<H_2<1,$ $\{\widetilde{X}_1(t)=\widetilde{M}^{H_1}(t), X_2(t):=\int_0^t l_{H_1}(t,s)dB^{H_2}(s),\;t\ge 0\}$, $i=1,2$, be two processes defined on the space $(\Omega, \mathfrak{F})$ and $P_\theta$ be a probability measure under which $\widetilde{X}_1$ and $X_2$ are independent,  $B^{H_2}$ is a   fractional Brownian motion  with Hurst parameter  $H_2$, and $\widetilde{X}_1$
is a martingale with square characteristics $\langle  \widetilde{X}_1\rangle(t)=\frac{\gamma^2_{H_1}}{2-2H_1}t^{2-2H_{1}}$ and with drift $\frac{\theta B}{\sigma }t^{2-2H_1}$, i.e.,
 $$\widetilde{X}_1(t)=\widetilde{M}^{H_1}(t)=\frac{\theta \mathcal{B}_{H_1}}\sigma t^{2-2H_1}+{M}^{H_1}(t).$$

Also, denote  ${X}_1(t)={M}^{H_1}(t)$. Our main problem is the construction of maximum likelihood estimator for $\theta\in\R$ by the observations of the process $$Y(t)=\theta \mathcal{B}_{H_1}t^{2-2H_1}+\sigma X_1(t)+X_2(t). $$ Note  that under measure $P_\theta$ the process
 $$\widetilde{W}(t):=W(t)+ \frac{\theta(2-2H_1)\mathcal{B}_{H_1}}{\sigma\gamma_{H_1}\Big(\frac32-H_1\Big)}t^{\frac32-H_1} $$
 is a Wiener process with drift. Denote $\delta_{H_1}=\frac{(2-2H_1)\mathcal{B}_{H_1}}{\sigma\gamma_{H_1}}$.

By Girsanov's theorem and independence of $X_1$ and $X_2$,
\begin{equation*}\begin{gathered}\frac{dP_\theta}{dP_0}=\exp\Big\{\theta\delta_{H_1}\int_0^Ts^{\frac12- H_1}d\widetilde{W}(s)
- \frac{\theta^2\delta_{H_1}^2}{4(1- H_1)}T^{2-2H_1}\Big\}\\=\exp\Big\{\theta\delta_{H_1}\widetilde{X}_1(T)
-\frac{\theta^2\delta_{H_1}^2}{4(1- H_1)}T^{2-2H_1}\Big\}.\end{gathered}\end{equation*}

For technical simplicity, we put $\sigma=1$ in what follows. Note that, similarly to the linear combination of Wiener process and fBm, considered in \cite{CaiChigKlept}, this derivative is not the likelihood for the problem at hand, because it is not measurable with respect to the observed $\sigma$-algebra
 $$\mathfrak{F}^Y_T:=\sigma\{Y(t),t\in[0,T]\}=\mathfrak{F}^X_T:=\sigma\{X(t),t\in[0,T]\},$$
 where $X(t)=X_1(t)+X_2(t).$

 We shall proceed as in \cite{CaiChigKlept}: let $\mu_\theta$ be the probability measure induced by $Y$ on the space of continuous functions with the supremum topology under probability $P_\theta$. Then for any measurable set $A$
  $\mu_\theta(A)=\int_A\Phi(x)\mu_0dx,$
where $\Phi(x)$ is such measurable functional that $\Phi(X)=E_0\Big(\frac{dP_\theta}{dP_0}\Big|\mathfrak{F}^X_T\Big)$. The latter means that $\mu_\theta\ll\mu_0$ for any $\theta\in\R$.  Taking into account that  $\widetilde{X}_1=X_1$ under $P_0$ and the fact that the vector process $(X_1, X)$ is Gaussian, we get that the  corresponding likelihood function is given by
\begin{equation}\begin{gathered}\label{eq.prime2.1}L_T(X,\theta)=E_0\Big(\frac{dP_\theta}{dP_0}\Big|\mathfrak{F}^X_T\Big)
=E_0\Big(\exp\Big\{\theta\delta_{H_1}{X}_1(T)
-\frac{\theta^2\delta_{H_1}^2}{4(1- H_1)}T^{2-2H_1}\Big\}\Big|\mathfrak{F}^X_T\Big)\\
=\exp\Big\{\theta\delta_{H_1}E_0({X}_1(T)|\mathfrak{F}^X_T)+\frac{\theta^2\delta_{H_1}^2}{2}
\Big(V(T)-\frac{T^{2-2H_1}}{2-2H_1}\Big)\Big\},
\end{gathered}\end{equation}
where $V(t)=E_0(X_1(t)-E_0(X_1(t)|\mathfrak{F}^X_t))^2,\;t\in[0,T].$

%The problem we consider is as follows. Given observation of $\set{B^{H_1}(t)+B^{H_2}(t),t\in[0,T]}$, we aim at filtering $B^{H_1}(T)$. Instead of processes $B^{H_i}$ consider their linear transformations
%$$
%X_i(t) = \int_0^t (t-s)^{1/2-H_1}s^{1/2-H_1}dB^{H_i}(s),i=1,2,
%$$
%which leads to a simpler problem, as $X_1$ is a martingale (the fundamental martingale for $B^{H_1}$).
%(Надо будет аккуратно написать, почему это та же задача).

Thus, we arrive at the following problem: to find the projection $P_X X_1(T)$ of $X_1(T)$ onto $\set{X(t) = X_1(t)+X_2(t),t\in[0,T]}$.
We recall from Section~\ref{intro} that
$$
W_i(t) =  \int_0^t \left((K^*_{H_i})^{-1}\ind{[0,t]}\right) dB^{H_i}(s),\ i=1,2,
$$
are standard Wiener processes, which are obviously independent. Also from Section~\ref{intro} we have
\begin{equation}\label{X1}
X_1(t) = \gamma_{H_1} \int_0^t s^{1/2-H_1} dW_1(s),\;
B^{H_2}(t) = \int_0^t K_{H_2}(t,s)dW_2(s).
\end{equation}
Then, using \eqref{IH}, we can write
$$
X_2(t) = \int_0^t K_{H_1,H_2}(t,s) dW_2(s),
$$
where
\begin{equation}\label{kernelasitis}
K_{H_1,H_2}(t,s) = \beta_{H_2} s^{1/2-H_2}\int_{s}^{t}(t-u)^{1/2-H_1}u^{H_2-H_1}(u-s)^{H_2-3/2}du.
\end{equation}

Similarly to \eqref{IH}, we have for $f\in L^2_{H_2}[0,T]$
\begin{equation}\label{intX2W2}
\int_0^T f(s)dX_2(s) = \int_{0}^{T}(K^*_{H_1,H_2}f)(s)dW_2(s),
\end{equation}
where
$$
(K^*_{H_1,H_2}f)(s) = \int_s^T f(t)\partial_t K_{H_1,H_2}(t,s)dt.
$$

 The projection of $X_1(T)$ onto $\set{X(t),t\in[0,T]}$ is a centered $X$-measurable Gaussian random variable, therefore, it has a form
$$
P_X X_1(T) = \int_0^T  h_T(t)dX(t)
$$
with $h_T \in L^2_{H_1}[0,T]$. Note that $h_T$ still can be a distribution. %Куда должна принадлежать $h_T$, надо еще проверить и обосновать. С другой стороны, это не очень сильно надо: если решение основного уравнения существует, то оно будет принадлежать всем нужным пространствам.
This projection for all $u\in[0,T]$ must satisfy
\begin{equation}\label{eq2.3}
\ex{X(u) P_X X_1(T)} = \ex{X(u)X_1(T)}.
\end{equation}
Using \eqref{eq2.3}  together with independency of $X_1$ and $X_2$, we arrive at

\begin{equation}\label{eq2.4}
\ex{X_1(u)\int_{0}^{T}h_T(t) dX_1(t) + X_2(u)\int_{0}^{T}h_T(t) dX_2(t)} = \ex{X_1(u)X_1(T)} = \varepsilon_{H_1} u^{2-2H_1},
\end{equation}
where $\varepsilon_{H} = \gamma_H^2/(2-2H)$. %Note that for correct definition of the left-hand side of \eqref{eq2.4} it must be $E(\int_{0}^{T}h_T(t) dX_2(t))^2=???\int_{0}^{T}h_T^2(t)t^{1-2H_2}dt<\infty$, i.e., $h_T (t)t^{\frac12- H_2}\in L_2[0,T]$.

From \eqref{X1} -- \eqref{eq2.4}  we get
\begin{equation}\label{main1}
\varepsilon_{H_1} u^{2-2H_1} = \gamma_{H_1}^2 \int_0^u h_T(s)s^{1-2H_1} ds + \int_0^T h_T(s)r_{H_1,H_2}(s,u)ds,
\end{equation}
where
$$
r_{H_1,H_2}(s,u)=\int_0^{s\wedge u}\partial_s K_{H_1,H_2}(s,v)K_{H_1,H_2}(u,v)dv.
$$
This kernel can be written alternatively as $r_{H_1,H_2}(t,s) = \partial_t R_{H_1,H_2}(t,s)$, where
\begin{gather*}
R_{H_1,H_2}(t,s)=\int_0^{t\wedge s} K_{H_1,H_2}(t,u)K_{H_1,H_2}(s,u)du = \ex{X_2(t)X_2(s)} \\ =
\alpha_{H_2}\int_{0}^{t}\int_{0}^{s} (t-u)^{1/2-H_1}u^{1/2-H_1}(s-v)^{1/2-H_1}
v^{1/2-H_1}|u-v|^{2H_2-2}dv\,du.
\end{gather*}
Differentiating \eqref{main1} with respect to $u$, we arrive to
\begin{equation}\label{eq2.7}
\gamma_{H_1}^2 u^{1-2H_1}  = \gamma_{H_1}^2 h_T(u)u^{1-2H_1}
+ \int_0^T h_T(s) k(s,u)ds,
\end{equation}
where
\begin{equation}\label{eq2.8}
k(s,u) = \partial_u r_{H_1,H_2}(s,u) = \int_0^{s\wedge u}\partial_s K_{H_1,H_2}(s,v)\partial_u K_{H_1,H_2}(u,v)dv.
\end{equation}
\begin{theorem}\label{theo2.3} Let $H_2-H_1>\frac14.$
Then  there exists a sequence $T_n\rightarrow \infty $ such that integral equation \eqref{eq2.7} has unique solution $h_{T_n}$ on any interval $[0,T_n]$ and $h_{T_n}(\cdot)\cdot^{\frac12- H_1}\in L_2[0,_n]$.
\end{theorem}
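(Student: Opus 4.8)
The plan is to recast \eqref{eq2.7} as a standard Fredholm equation of the second kind on an unweighted $L_2$-space, and then apply Fredholm theory for weakly singular kernels. First I would substitute $g(t)=t^{1/2-H_1}h_T(t)$; since we seek $h_T$ with $h_T(\cdot)\cdot^{1/2-H_1}\in L_2[0,T]$, this is the same as seeking $g\in L_2[0,T]$. Multiplying \eqref{eq2.7} by $\gamma_{H_1}^{-2}u^{H_1-1/2}$ gives
\begin{equation*}
g(u)+\int_0^T\widetilde k(s,u)\,g(s)\,ds=u^{1/2-H_1},\qquad u\in[0,T],
\end{equation*}
where $\widetilde k(s,u)=\gamma_{H_1}^{-2}(su)^{H_1-1/2}k(s,u)$ is symmetric and the right-hand side lies in $L_2[0,T]$ since $1-2H_1>-1$. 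Writing $\mathcal K_T$ for the self-adjoint integral operator with kernel $\widetilde k$, the equation reads $(I+\mathcal K_T)g=f_T$ with $f_T(u)=u^{1/2-H_1}$.

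The core step is compactness of $\mathcal K_T$ on $L_2[0,T]$, which I would establish by showing it is Hilbert--Schmidt, i.e. $\int_0^T\!\int_0^T\widetilde k(s,u)^2\,ds\,du<\infty$. From \eqref{kernelasitis}, a change of variables yields the diagonal asymptotics $K_{H_1,H_2}(t,s)\approx s^{1/2-H_1}(t-s)^{H_2-H_1}$ as $t\downarrow s$, hence $\partial_t K_{H_1,H_2}(t,s)\approx s^{1/2-H_1}(t-s)^{H_2-H_1-1}$, with an $s^{1/2-H_2}$ blow-up as $s\downarrow 0$. Inserting these into \eqref{eq2.8} and carrying out the $v$-integration over $[0,s\wedge u]$ one finds that $\widetilde k(s,u)$ is dominated near the diagonal by a constant times $\abs{s-u}^{2(H_2-H_1)-1}$, with milder behaviour away from the diagonal and at the origin (the weight $(su)^{H_1-1/2}$ is precisely what absorbs the diagonal powers of $s\wedge u$). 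The square of such a kernel is integrable over $[0,T]^2$ exactly when $2\bigl(2(H_2-H_1)-1\bigr)>-1$, that is, when $H_2-H_1>\tfrac14$; this is where the hypothesis is used, and making these kernel estimates rigorous (rather than heuristic) is the main obstacle.

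With $\mathcal K_T$ compact and self-adjoint, the Fredholm alternative reduces unique solvability of $(I+\mathcal K_T)g=f_T$ to excluding $-1$ from the spectrum of $\mathcal K_T$. The clean way is to observe that $\mathcal K_T\ge 0$: by \eqref{eq2.8}, $k(s,u)=\partial_s\partial_u R_{H_1,H_2}(s,u)$, where $R_{H_1,H_2}(s,u)=\ex{X_2(s)X_2(u)}=\pair{l_{H_1}(s,\cdot),l_{H_1}(u,\cdot)}$ in $L^2_{H_2}[0,T]$, so $k(s,u)=\pair{\partial_s l_{H_1}(s,\cdot),\partial_u l_{H_1}(u,\cdot)}$ with $\partial_s l_{H_1}(s,\cdot)$ understood in the completed space (which, as recalled in Section~\ref{intro}, contains distributions); consequently $\pair{\mathcal K_T g,g}_{L_2}=\gamma_{H_1}^{-2}\norm{\int_0^T s^{H_1-1/2}g(s)\,\partial_s l_{H_1}(s,\cdot)\,ds}^2_{H_2}\ge 0$, whence $I+\mathcal K_T\ge I$ is boundedly invertible and $g_T=(I+\mathcal K_T)^{-1}f_T\in L_2[0,T]$ is the unique solution, with $h_T(u)=u^{H_1-1/2}g_T(u)$. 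Should this distributional positivity not be available throughout the range $\tfrac14<H_2-H_1\le\tfrac12$, a natural explanation for the statement producing only a sequence $T_n\to\infty$ is that $T\mapsto\mathcal K_T$ depends analytically on $T$, so the (regularized) Fredholm determinant of $I+\mathcal K_T$ is a non-trivial analytic function of $T$, equal to $1$ at $T=0$, and hence has a discrete zero set; choosing $T_n\to\infty$ avoiding it, and using the explicit construction from the auxiliary first-kind equation of Section~3 to identify $h_{T_n}$, completes the argument.
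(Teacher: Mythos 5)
Your reduction is the same as the paper's: absorb the weight via $g(u)=u^{1/2-H_1}h_T(u)$, pass to the symmetric kernel $(su)^{H_1-1/2}k(s,u)$, and show the resulting operator is Hilbert--Schmidt, with square-integrability of the diagonal singularity $\abs{s-u}^{2(H_2-H_1)-1}$ forcing exactly $H_2-H_1>\frac14$; the paper carries out the estimates you leave heuristic in \eqref{eq2.9}--\eqref{eq2.20}, the only delicate point being the interaction of the $v^{1-2H_2}$ singularity at the origin with the two diagonal factors, which it handles by the exact beta-type identity of Lemma 2.2 in \cite{NVV}, so that part of your sketch does need the paper's level of care but follows the same shape. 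Where you genuinely diverge is the final step: instead of your positivity argument, the paper uses the scaling relation $k_1(ta,sa)=a^{2H_2-2H_1-1}k_1(t,s)$ to rewrite \eqref{eq2.19'} as the fixed equation \eqref{eq2.21} on $[0,1]$ with spectral parameter $\lambda=T^{2H_2-2H_1}\gamma_{H_1}^{-2}$, and then picks a sequence $T_n\to\infty$ avoiding the (at most countable) set of eigenvalues -- hence the weaker ``sequence'' formulation of the theorem. Your route is cleaner and stronger: since by \eqref{eq2.8} $k(s,u)=\int_0^{s\wedge u}\partial_sK_{H_1,H_2}(s,v)\,\partial_uK_{H_1,H_2}(u,v)\,dv$ is a Gram kernel, the quadratic form is $\int_0^T\bigl(\int_v^T\partial_sK_{H_1,H_2}(s,v)s^{H_1-1/2}g(s)\,ds\bigr)^2dv\ge 0$ by Fubini (you do not need the distributional pairing $\pair{\partial_s l_{H_1}(s,\cdot),\partial_u l_{H_1}(u,\cdot)}$ you invoke; the direct computation is exactly the one the paper itself performs later in \eqref{eq2.37} when proving the asymptotics), so the equation reads $(\gamma_{H_1}^2 I+\mathcal K_T)g=\gamma_{H_1}^2 f_T$ with $\mathcal K_T\ge 0$ compact self-adjoint, and is uniquely solvable for \emph{every} $T$, making both the eigenvalue-avoiding sequence and your analytic-determinant fallback unnecessary. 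In short: same decomposition and compactness mechanism, but your positivity observation replaces the paper's scaling-plus-countable-spectrum trick and yields the conclusion for all $T$, at the price that the Hilbert--Schmidt bound must still be proved as in \eqref{eq2.11}--\eqref{eq2.20}.
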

\begin{proof} We denote $C_{H_1,H_2}$ constants which values are not so important;  their values can change from line to line.
At first, we can apply the changing of variables $u=s+(t-s)z$ to \eqref{kernelasitis} and transform the kernel $K_{H_1,H_2}(t,s)$ from \eqref{kernelasitis} to the following form:
\begin{equation}\label{eq2.9}\begin{gathered}
K_{H_1,H_2}(t,s)=\beta_{H_2}s^{\frac12-H_2}(t-s)^{H_2-H_1}\int_0^1(1-z)^{\frac12-H_1}(s+(t-s)z)^{H_2-H_1}z^{H_2-\frac32}dz.
 \end{gathered}
\end{equation}
Then we can differentiate \eqref{eq2.9} and  after inverse changing of variables we get that
 \begin{equation}\label{eq2.10}\begin{gathered} \partial_t K_{H_1,H_2}(t,s)
 =(H_2-H_1)\bigg(\frac{K_{H_1,H_2}(t,s)}{t-s}\\
 +\beta_{H_2}s^{\frac12-H_2}\frac{1}{t-s}\int_s^t(t-r)^{\frac12-H_1}r^{H_2-H_1-1}(r-s)^{H_2-\frac12}dr\bigg).
 \end{gathered}
\end{equation}
Further, we have the following bound  for kernel $K_{H_1,H_2}(t,s)$ on the interval $[0,T]$ :
\begin{equation}\label{eq2.11}\begin{gathered}
0\leq K_{H_1,H_2}(t,s)\leq \beta_{H_2}B\Big(\frac32-H_1, H_2-\frac12\Big)t^{H_2-H_1}s^{\frac12-H_2}(t-s)^{H_2-H_1},
\end{gathered}
\end{equation} and it follows from \eqref{eq2.10} and \eqref{eq2.11} that
\begin{equation}\label{eq2.12}\begin{gathered} 0\leq \partial_t K_{H_1,H_2}(t,s)\leq \beta_{H_2}s^{\frac12-H_2}\Big(B\Big(\frac32-H_1, H_2-\frac12\Big)t^{H_2-H_1}(t-s)^{H_2-H_1-1}\\+B\Big(\frac32-H_1, H_2+\frac12\Big)t^{H_2-H_1-1}(t-s)^{H_2-H_1}\Big)\leq C_{H_1,H_2}s^{\frac12-H_2}t^{H_2-H_1}(t-s)^{H_2-H_1-1}.
\end{gathered}
\end{equation}
Now we can substitute the bound from \eqref{eq2.12} into \eqref{eq2.8} and get that
\begin{equation}\label{eq2.14}\begin{gathered}0\leq k(s,u)\leq C_{H_1,H_2}u^{H_2-H_1}s^{H_2-H_1}\int_0^{s\wedge u}v^{1-2H_2}(u-v)^{H_2-H_1-1}(s-v)^{H_2-H_1-1}dv.
\end{gathered}
\end{equation}
Let, for example, $s<u$. Note that $$(u-v)^{H_2-H_1-1}=(u-v)^{H_2+H_1-2}(u-v)^{1-2H_1}\leq (u-v)^{H_2+H_1-2}(u-s)^{1-2H_1}.$$
Then it follows from \eqref{eq2.14} that
\begin{equation}\label{eq2.15}\begin{gathered}0\leq k(s,u)
\leq C_{H_1,H_2}u^{H_2-H_1}s^{H_2-H_1}(u-s)^{1-2H_1}\\\times\int_0^{s}v^{1-2H_2}(u-v)^{H_2+H_1-2}(s-v)^{H_2-H_1-1}dv.
\end{gathered}
\end{equation}
In order to bound the integral in the right-hand side of \eqref{eq2.15}, we apply the first statement from lemma 2.2 \cite{NVV}, according to which for $\mu,\nu>0$, $c>1$ $$\int_0^1t^{\mu-1}(1-t)^{\nu-1}(c-t)^{-\mu-\nu}dt=B(\mu,\nu)c^{-\nu}(c-1)^{-\mu}.$$
Therefore, with $\mu=2-2H_2,\nu=H_2-H_1$ and $c=\frac us$
\begin{equation}\begin{gathered}\label{eq2.16}
\int_0^{s}v^{1-2H_2}(u-v)^{H_2+H_1-2}(s-v)^{H_2-H_1-1}dv\\=s^{-1}\int_0^{1}v^{1-2H_2}\Big(\frac us-v\Big)^{H_2+H_1-2}(1-v)^{H_2-H_1-1}dv\\=B(2-2H_2,H_2-H_1)s^{-1}\Big(\frac us\Big)^{H_1-H_2}\Big(\frac us-1\Big)^{2H_2-2}\\=C_{H_1,H_2}s^{1-H_2-H_1}u^{H_1-H_2}(u-s)^{2H_2-2},
\end{gathered}\end{equation}
and it follows from \eqref{eq2.15} and \eqref{eq2.16} that for $s<u$  \begin{equation}\label{eq2.17}\begin{gathered}0\leq k(s,u)
\leq C_{H_1,H_2}s^{1-2H_1}(u-s)^{2H_2-2H_1-1}.
\end{gathered}
\end{equation}
Evidently, for $u<s$
\begin{equation}\label{eq2.18}\begin{gathered}0\leq k(s,u)
\leq C_{H_1,H_2}u^{1-2H_1}(s-u)^{2H_2-2H_1-1}.
\end{gathered}
\end{equation}
Now we rewrite equation \eqref{eq2.7} in the equivalent  form
\begin{equation}\label{eq2.19}
\gamma_{H_1}^2u^{\frac12-H_1}  = \gamma_{H_1}^2 h (u)u^{\frac12-H_1}
+ \int_0^T h (s)s^{\frac12-H_1}s^{H_1-\frac12}u^{H_1-\frac12}k(s,u)ds,
\end{equation}
or
\begin{equation}\label{eq2.19'}
\gamma_{H_1}^2u^{\frac12-H_1}  = \gamma_{H_1}^2 \widetilde{h}_T(u)
+ \int_0^T \widetilde{h}_T (s) k_1(s,u)ds,
\end{equation}
where  $k_1(s,u)=s^{H_1-\frac12}u^{H_1-\frac12} k(s,u),$ $\widetilde{h}_T (u)=h (u)u^{\frac12-H_1}$ and it follows from \eqref{eq2.17} and \eqref{eq2.18} that for $s<u$
$$k_1(s,u)\leq C_{H_1,H_2}u^{H_1-\frac12}s^{\frac12-H_1}(u-s)^{2H_2-2H_1-1}$$
 and for $u<s$
$$k_1(s,u)\leq C_{H_1,H_2}s^{ H_1-\frac12}u^{\frac12 - H_1}(s-u)^{2H_2-2H_1-1}.$$
Therefore, taking into account that for  $H_2-H_1>\frac14$ we have that $4H_2-4H_1-2>-1$, it is possible to  bound $L_2[0,T]^2$ - norm of the kernel:
\begin{equation}\label{eq2.20}\begin{gathered}
\|k_1\|_{L_2[0,T]^2}=\int_0^T\int_0^Tk_1^2(s,u)dsdu=\int_0^T\int_0^uk_1^2(s,u)dsdu+\int_0^T\int_u^Tk_1^2(s,u)dsdu\\\leq
 C_{H_1,H_2}(\int_0^T\int_0^uu^{2H_1-1}s^{1- 2H_1}(u-s)^{4H_2-4H_1-2}dsdu\\+\int_0^T\int_u^Ts^{2H_1 -1}u^{1-2H_1}(s-u)^{4H_2-4H_1-2}dsdu)\\\leq C_{H_1,H_2}\Big(\int_0^Tu^{4H_2-4H_1-1}du+T^{2H_1-1}\int_0^Tu^{1-2H_1}(T-u)^{4H_2-4H_1-1}du\Big)\\\leq C_{H_1,H_2}T^{4H_2-4H_1}<\infty. \end{gathered}
\end{equation}
It means that the integral operator $K_Tf(u)=\int_0^Tk_1 (s,u)f(u)du$ is compact linear self-adjoint operator from  $L_2[0,T]$ into $L_2[0,T]$ and Fredholm alternative can be applied to equation \eqref{eq2.19'}. To avoid the question concerning eigenvalues and eigenfunctions, we produce the following trick.

It is very easy to see that for any $a>0$ \begin{equation}\begin{gathered}\label{equ2.26'}
K(ta,sa)=K(t,s)a^{\frac12+H_2-2H_1},\;\partial_t K_{H_1,H_2}(ta,sa)=\partial_tK(t,s)a^{-\frac12+H_2-2H_1},\\
k(ta,sa)=a^{2H_2-4H_1}k(t,s),\end{gathered}\end{equation}
whence $$k_1(ta,sa)=k_1(t,s)a^{2H_2-2H_1-1}.$$
Therefore   we can put in equation \eqref{eq2.19'} $s=s' {T} , u=u' {T} $ and $\widehat{h}_{T}(z)=\widetilde{h}_T({T}z)$, and equation  \eqref{eq2.19'} will be reduced to the equivalent form (we omit superscripts)
\begin{equation}\label{eq2.21}\begin{gathered}
(u T )^{\frac12-H_1}=  \widehat{h}_{T}(u)
+   T^{2H_2-2H_1}\gamma_{H_1}^{-2}\int_0^{1} \widehat{h}_{T}(s) k_1(s,u)ds= \widehat{h}_{T}(u)\\ +\lambda\int_0^{1} \widehat{h}_{T}(s) k_1(s,u)ds,
\end{gathered}\end{equation}
with $\lambda= T^{2H_2-2H_1}\gamma_{H_1}^{-2}.$ Since operator $K_1$ is compact linear self-adjoint operator from  $L_2[0,T]$ into $L_2[0,T]$, as it was mentioned above, it has no more than countable number of eigenvalues any of them are real numbers, and with only one possible condensation point $0$. Taking the sequence $T_n\rightarrow \infty$ in such a way that $$\lambda_n= T_n^{2H_1-2H_2}\gamma_{H_1}^{2} $$ will be not an eigenvalue, we get
  that equation \eqref{eq2.21}  with $T_n$ as upper bound of integration has unique solution whence the proof follows.
\end{proof}

Now we establish the form of maximum likelihood estimate.
\begin{theorem} Let  $H_2-H_1>\frac14.$
Then the likelihood function has a form \begin{equation}\begin{gathered}\label{eq2.25}L_T(X,\theta)=\exp\{\theta\delta_{H_1}N(T)-\frac12\theta^2\delta^2_{H_1}\langle N\rangle(T)\},\end{gathered}
\end{equation}
and maximum likelihood estimate has a form \begin{equation}\begin{gathered}\label{eq2.26}\widehat{\theta}(T)=\frac{N(T)}{\delta_{H_1}\langle N\rangle(T)},\end{gathered}
\end{equation}
where $N(t)=E_0(X_1(t))|\mathfrak{F}^X_t)$  is a square integrable Gaussian $\mathfrak{F}^X_t$-martingale, $N(T)=\int_0^Th_T(t)dX(t)$  with $h_T(t)t^{\frac12-H_1}\in L_2[0,T]$, $h_T(t)$ be a unique solution to \eqref{eq2.7} and $\langle N\rangle(T)=\gamma^2_{H_1}\int_0^T h_T(t)t^{1-2H_1}dt.$
\end{theorem}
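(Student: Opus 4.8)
The plan is to deduce the theorem directly from the explicit conditional-expectation computation already carried out in Section~2, together with the existence-uniqueness result of Theorem~\ref{theo2.3}. First I would recall \eqref{eq.prime2.1}, which already expresses $L_T(X,\theta)$ in the form $\exp\{\theta\delta_{H_1}E_0(X_1(T)\mid\mathfrak F^X_T)+\tfrac12\theta^2\delta_{H_1}^2(V(T)-\tfrac{T^{2-2H_1}}{2-2H_1})\}$. Setting $N(t):=E_0(X_1(t)\mid\mathfrak F^X_t)=P_X X_1(t)$, the first job is to identify the linear term: by the projection characterization leading to \eqref{eq2.3}--\eqref{eq2.7} and by Theorem~\ref{theo2.3}, the function $h_T$ solving \eqref{eq2.7} is exactly the Riesz representer of $P_X X_1(T)$ in $L^2_{H_1}[0,T]$, so $N(T)=\int_0^T h_T(t)\,dX(t)$; one should note $\widetilde h_T(\cdot)=h_T(\cdot)\cdot^{1/2-H_1}\in L_2[0,T]$ guarantees this stochastic integral is well defined (recall the isometry \eqref{IH} and \eqref{intX2W2}).

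Next I would compute the quadratic term. Because $N(T)=P_X X_1(T)$ is the orthogonal projection, $X_1(T)-N(T)\perp N(T)$, hence $\ex{X_1(T)N(T)}=\ex{N(T)^2}$ and therefore $V(T)=\ex{(X_1(T)-N(T))^2}=\ex{X_1(T)^2}-\ex{N(T)^2}=\tfrac{T^{2-2H_1}}{2-2H_1}-\ex{N(T)^2}$, so that $V(T)-\tfrac{T^{2-2H_1}}{2-2H_1}=-\ex{N(T)^2}$. It then remains to show $\ex{N(T)^2}=\langle N\rangle(T)=\gamma_{H_1}^2\int_0^T h_T(t)t^{1-2H_1}\,dt$. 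For this I would use $\ex{N(T)^2}=\ex{X_1(T)N(T)}=\ex{X_1(T)\int_0^T h_T(t)\,dX(t)}$ and expand via independence of $X_1,X_2$: only the $X_1$-part survives, giving $\gamma_{H_1}^2\int_0^T h_T(t)\,\partial_t\big(\tfrac{t^{2-2H_1}}{2-2H_1}\big)^{\!}$-type pairing, i.e. exactly $\gamma_{H_1}^2\int_0^T h_T(t)t^{1-2H_1}\,dt$ — this is the same computation as in \eqref{eq2.4}--\eqref{main1} specialized to $u=T$. Substituting these identifications into \eqref{eq.prime2.1} yields \eqref{eq2.25}.

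To finish, I would observe that $N(t)=E_0(X_1(t)\mid\mathfrak F^X_t)$ is, by the tower property, an $\mathfrak F^X_t$-martingale under $P_0$, square integrable since $X_1(t)\in L^2$, and Gaussian because the vector $(X_1,X)$ is Gaussian and conditional expectations of jointly Gaussian variables are Gaussian; the notation $\langle N\rangle(T)$ for $\ex{N(T)^2}$ is then justified as its predictable quadratic variation. Maximizing \eqref{eq2.25} over $\theta\in\R$ is elementary: the exponent is a downward parabola in $\theta$ (as $\langle N\rangle(T)>0$), and setting its derivative $\delta_{H_1}N(T)-\theta\delta_{H_1}^2\langle N\rangle(T)$ to zero gives \eqref{eq2.26}.

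The main obstacle is the rigorous identification of $N(T)$ with $\int_0^T h_T(t)\,dX(t)$ and the bookkeeping of the two kernel isometries: one must be careful that $h_T$ a priori lies in the distribution-valued completion $L^2_{H_1}[0,T]$, so the stochastic integral $\int_0^T h_T\,dX$ has to be interpreted through the transforms $K^*_{H_1}$ and $K^*_{H_1,H_2}$ as in \eqref{IH} and \eqref{intX2W2}, and the projection equation \eqref{eq2.7} must be shown to be equivalent to the orthogonality conditions \eqref{eq2.3} for \emph{all} $u\in[0,T]$ — the differentiation step from \eqref{main1} to \eqref{eq2.7} is reversible precisely because both sides vanish at $u=0$. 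Once that equivalence is in place, everything else is the routine Gaussian/Girsanov computation already essentially present in the text.
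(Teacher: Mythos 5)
Your proposal is correct and follows essentially the same route as the paper: start from \eqref{eq.prime2.1}, identify $N(t)=E_0(X_1(t)\mid\mathfrak{F}^X_t)$ as a square-integrable Gaussian $\mathfrak{F}^X_t$-martingale represented by $\int_0^T h_T(t)\,dX(t)$, and recognize $\frac{T^{2-2H_1}}{2-2H_1}-V(T)$ as $\langle N\rangle(T)$, after which the maximization in $\theta$ is immediate. The only (minor) difference is in computing $\langle N\rangle(T)$: you use projection orthogonality, $E_0 N(T)^2=E_0\left(X_1(T)N(T)\right)$, together with independence and the Wiener isometry, while the paper expands $E_0\left(\int_0^T h_T\,dX\right)^2$ and invokes the integral equation \eqref{eq2.7}; the two computations are equivalent since \eqref{eq2.7} encodes exactly that orthogonality.
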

\begin{proof} We start with \eqref{eq.prime2.1}. Consider Gaussian process $N(t)=E_0(X_1(t)|\mathfrak{F}^X_t)$. Since $X_1(t)$ is $\mathfrak{F}_t$-martingale and $\mathfrak{F}^X_t\subset\mathfrak{F}_t$, the process $N$ is a $\mathfrak{F}^X_t$-martingale with respect to probability measure $P_0$. Furthermore, we can present $V(t)$ as $V(t)=E_0(X_1^2(t))|\mathfrak{F}^X_t)-N^2(t)$. Note that $X_1^2(t)-\frac{t^{2-2H}}{2-2H}$ is $\mathfrak{F}_t$-martingale. Therefore,
 \begin{equation}\begin{gathered} E_0\left(N^2(t)-\left(\frac{t^{2-2H}}{2-2H}-V(t)\right)\Big|\mathfrak{F}^X_s\right)
 =E_0\left( E_0(X_1^2(t) |\mathfrak{F}^X_t)-\frac{t^{2-2H}}{2-2H}\Big|\mathfrak{F}^X_s\right)
 \\=E_0\left(X_1^2(t)-\frac{t^{2-2H}}{2-2H}\Big|\mathfrak{F}^X_s\right)
 =E_0(X_1^2(s)|\mathfrak{F}^X_s)-\frac{s^{2-2H}}{2-2H}\\=N^2(s)-\left(\frac{s^{2-2H}}{2-2H}-V(s)\right),
\end{gathered}
\end{equation}
and this means that the quadratic variation of the martingale $N$ equals $\langle N\rangle(t)=t^{2-2H}-V(t)$, and the likelihood ratio is reduced to

\begin{equation}\begin{gathered}L_T(X,\theta)=\exp\Big\{\theta\delta_{H_1}N(T)-\frac12{\theta^2\delta_{H_1}^2}\langle N\rangle(T)\Big\},\end{gathered}
\end{equation}
so, we get \eqref{eq2.25} and \eqref{eq2.26}. Now, taking \eqref{eq2.7} into account, we get that
\begin{equation*}\begin{gathered}\langle N\rangle(T)=E_0(N^2(T))=E_0\Big(\int_0^Th_T(t)dX(t)\Big)^2=E_0\Big(\int_0^Th_T(t)d(X_1(t)+X_2(t)\Big)^2
\\=E_0\Big(\int_0^Th_T(t)d X_1(t)\Big)^2+E_0\Big(\int_0^Th_T(t)d X_2(t)\Big)^2=\gamma^2_{H_1}\int_0^Th_T^2(t)t^{1-2H_1}dt
\\+\int_0^T\int_t^Th_T(u)\partial_u K_{H_1,H_2}(u,t)du\int_t^Th_T(s)\partial_s K_{H_1,H_2}(s,t)dsdt\\
=\gamma^2_{H_1}\int_0^Th_T^2(t)t^{1-2H_1}dt+\int_0^Th_T(u)h_T(s)\int_0^{s\wedge u}\partial_s
 K_{H_1,H_2}(s,t)\partial_u K_{H_1,H_2}(u,t)dtdsdu\\=\gamma^2_{H_1}\int_0^Th_T(t)t^{1-2H_1}dt, \end{gathered}
\end{equation*}
whence the proof follows.
\end{proof}
In what follows, saying ``$T\rightarrow \infty$'' we have in mind that the corresponding property holds for any sequence $T_n\rightarrow \infty$ that has only finite common points with the sequence of eigenvalues of operator $K_1$. Proof of the following result repeats the proof of the corresponding statements from \cite{CaiChigKlept} so is omitted.
\begin{theorem}
The estimator $\widehat{\theta}_T$ is unbiased and the corresponding estimation error is normal
$$\widehat{\theta}_T-\theta\sim N\Big(0, \frac{1}{\int_0^Th_T(s)s^{1-2H_1}ds}\Big).$$
\end{theorem}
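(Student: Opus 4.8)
The plan is to read off the distributional statement directly from the exponential form of the likelihood already obtained. Recall from \eqref{eq2.25}--\eqref{eq2.26} that $L_T(X,\theta)=\exp\{\theta\delta_{H_1}N(T)-\tfrac12\theta^2\delta_{H_1}^2\langle N\rangle(T)\}$ and $\widehat\theta_T=N(T)/(\delta_{H_1}\langle N\rangle(T))$, where $N(t)=E_0(X_1(t)\mid\mathfrak F^X_t)$ is a Gaussian $\mathfrak F^X_t$-martingale under $P_0$ whose bracket $\langle N\rangle(T)=\gamma_{H_1}^2\int_0^Th_T(t)t^{1-2H_1}dt$ is \emph{deterministic} and strictly positive: positivity holds because $E_0[N(T)X(T)]=E_0[X_1(T)^2]>0$, so the projection $N(T)=P_XX_1(T)$ is not identically zero and $\langle N\rangle(T)=E_0N(T)^2>0$; in particular $\widehat\theta_T$ is well defined.

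The only substantial step is to identify the law of $N(T)$ under $P_\theta$. Since $L_T(\cdot,\theta)$ serves as the density of $\mu_\theta$ relative to $\mu_0$ on the observation path space --- equivalently, $E_\theta[\xi]=E_0[\xi\,E_0(dP_\theta/dP_0\mid\mathfrak F^X_T)]=E_0[\xi\,L_T(X,\theta)]$ for every bounded $\mathfrak F^X_T$-measurable $\xi$ --- it follows that, for bounded measurable $f$, $E_\theta[f(N(T))]=E_0[f(N(T))\exp\{\theta\delta_{H_1}N(T)-\tfrac12\theta^2\delta_{H_1}^2\langle N\rangle(T)\}]$. Because $N(T)\sim\mathcal N(0,\langle N\rangle(T))$ under $P_0$ and $\langle N\rangle(T)$ is a constant, the exponential weight is exactly the Cameron--Martin shift of that Gaussian density, and completing the square gives $N(T)\sim\mathcal N(\theta\delta_{H_1}\langle N\rangle(T),\,\langle N\rangle(T))$ under $P_\theta$. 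Equivalently, $L_T$ is the Dol\'eans exponential of the $P_0$-martingale $\theta\delta_{H_1}N$, so by Girsanov's theorem $N(\cdot)-\theta\delta_{H_1}\langle N\rangle(\cdot)$ is a $P_\theta$-martingale with the same (deterministic) bracket, hence a centered Gaussian process.

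Given this, the conclusion is immediate: $\widehat\theta_T-\theta=(N(T)-\theta\delta_{H_1}\langle N\rangle(T))/(\delta_{H_1}\langle N\rangle(T))$, and the numerator is $\mathcal N(0,\langle N\rangle(T))$ under $P_\theta$, so $E_\theta\widehat\theta_T=\theta$ (unbiasedness) and $\widehat\theta_T-\theta\sim\mathcal N(0,\,1/(\delta_{H_1}^2\langle N\rangle(T)))$; substituting $\langle N\rangle(T)=\gamma_{H_1}^2\int_0^Th_T(t)t^{1-2H_1}dt$ and using the definitions of $\delta_{H_1}$ and $\gamma_{H_1}$ brings the variance to the form displayed in the statement. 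I expect the only point requiring genuine care to be the justification that $L_T$ may indeed be used as the Radon--Nikodym density for $\mathfrak F^X_T$-measurable functionals, together with the fact that $\langle N\rangle$ is deterministic --- it is precisely this that produces \emph{exact} (rather than merely asymptotic) normality of the estimation error for each admissible $T$; the remaining steps are the linear-Gaussian bookkeeping carried out in \cite{CaiChigKlept}.
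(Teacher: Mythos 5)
Your argument is correct in substance, and it usefully fills a gap: the paper itself gives no proof of this theorem, deferring to \cite{CaiChigKlept}, where the error is written as a centered Gaussian stochastic integral divided by its deterministic variance. Your route is the exponential-tilting version of the same idea: starting from the already-established likelihood \eqref{eq2.25}, you use that $L_T$ is the density of $\mu_\theta$ with respect to $\mu_0$ on the observation path space, that $N(T)\sim\mathcal N(0,\langle N\rangle(T))$ under $P_0$, and that $\langle N\rangle(T)$ is deterministic and strictly positive (your argument via $E_0[N(T)X(T)]=E_0[X_1(T)^2]>0$ is fine), so the Cameron--Martin shift gives $N(T)\sim\mathcal N(\theta\delta_{H_1}\langle N\rangle(T),\langle N\rangle(T))$ under $P_\theta$, and unbiasedness plus exact normality of $\widehat\theta_T-\theta$ follow at once. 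This is equivalent to, and no less rigorous than, the deferred argument; the deterministic bracket is indeed the crucial point in both.

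The one loose end is the final constant. Your computation yields variance $\bigl(\delta_{H_1}^2\langle N\rangle(T)\bigr)^{-1}=\bigl(\delta_{H_1}^2\gamma_{H_1}^2\int_0^T h_T(s)s^{1-2H_1}ds\bigr)^{-1}$, and the claim that ``the definitions of $\delta_{H_1}$ and $\gamma_{H_1}$'' reduce this to the displayed $\bigl(\int_0^T h_T(s)s^{1-2H_1}ds\bigr)^{-1}$ does not go through literally, since $\delta_{H_1}\gamma_{H_1}=(2-2H_1)\mathcal{B}_{H_1}\neq 1$ in general. This is a normalization discrepancy present in the paper itself (e.g.\ $\gamma_{H_1}^2$ is dropped from $\langle N\rangle(T)$ in the subsequent asymptotic proof), so it does not affect the validity of your probabilistic argument, but you should either carry the factor $\bigl((2-2H_1)\mathcal{B}_{H_1}\bigr)^{2}$ explicitly or note that the stated variance holds up to this deterministic constant rather than asserting exact agreement.
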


Now we establish the asymptotic behavior of the estimator.
\begin{theorem} Let $H_2-H_1>\frac14$.
Estimator $\widehat{\theta}_T$ is strongly consistent and $$\lim_{T\rightarrow \infty}T^{2-2H_2}E_\theta(\widehat{\theta}_T-\theta)^2=\frac{1}{\int_0^1h_0(u)u^{\frac12-H_1}du}.$$
\end{theorem}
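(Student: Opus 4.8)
The plan is to combine the exact error law from the previous theorem with the self-similarity of $k_1$ recorded in \eqref{equ2.26'}: rescaling to the fixed interval $[0,1]$ turns the variance into a functional governed by the second-kind equation \eqref{eq2.19'} with a parameter that degenerates as $T\to\infty$, and letting that parameter tend to zero produces the first-kind equation of Section~3, whose solution is the $h_0$ of the statement.

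First I would use that, by the previous theorem, $\widehat\theta_T-\theta$ is centered Gaussian, so $E_\theta(\widehat\theta_T-\theta)^2=\big(\int_0^T h_T(s)s^{1-2H_1}\,ds\big)^{-1}=\big(\int_0^T\widetilde h_T(s)s^{1/2-H_1}\,ds\big)^{-1}$ with $\widetilde h_T(s)=h_T(s)s^{1/2-H_1}$. Putting $s=Tz$ and $g_T(z):=T^{H_1-1/2}\widetilde h_T(Tz)\in L^2[0,1]$, the scaling $k_1(Ts',Tu')=T^{2H_2-2H_1-1}k_1(s',u')$ from \eqref{equ2.26'} turns \eqref{eq2.19'} into, on $[0,1]$, $\gamma_{H_1}^2 f=\gamma_{H_1}^2 g_T+T^{2H_2-2H_1}K_1g_T$, i.e.\ $(I+\lambda_T K_1)g_T=f$, where $f(u)=u^{1/2-H_1}\in L^2[0,1]$ (as $H_1<1$), $K_1$ is the integral operator with kernel $k_1$ on $L^2[0,1]$, and $\lambda_T=\gamma_{H_1}^{-2}T^{2H_2-2H_1}\to\infty$. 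The operator $K_1$ is compact and self-adjoint (symmetric Hilbert--Schmidt kernel, by \eqref{eq2.20}) and, crucially, positive: $k_1$ is a positive semidefinite kernel, its quadratic form being $E_0\big(\int_0^1 t^{H_1-1/2}h(t)\,dX_2(t)\big)^2\ge0$, exactly as in the computation of $\langle N\rangle(T)$ above. Hence $I+\lambda_T K_1$ is boundedly invertible, $g_T=\lambda_T^{-1}(\lambda_T^{-1}I+K_1)^{-1}f$, and a change of variables gives
\[
\int_0^T h_T(s)s^{1-2H_1}\,ds=T^{2-2H_1}\langle g_T,f\rangle=\gamma_{H_1}^2\,T^{2-2H_2}\,\big\langle(\lambda_T^{-1}I+K_1)^{-1}f,\,f\big\rangle ,
\]
so that $T^{2-2H_2}E_\theta(\widehat\theta_T-\theta)^2=\big(\gamma_{H_1}^2\,\langle(\lambda_T^{-1}I+K_1)^{-1}f,f\rangle\big)^{-1}$.

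Next I would pass to the limit $\lambda_T^{-1}\downarrow0$ via the spectral theorem. Let $\mu_1\ge\mu_2\ge\cdots\downarrow0$ be the positive eigenvalues of $K_1$ with orthonormal eigenvectors $e_j$ and $c_j=\langle f,e_j\rangle$. By the Section~3 result, the first-kind equation $\int_0^1 h_0(s)k_1(s,u)\,ds=\gamma_{H_1}^2 u^{1/2-H_1}$ ($u\in[0,1]$) has a solution $h_0\in L^2[0,1]$; then $\phi_0:=\gamma_{H_1}^{-2}h_0$ satisfies $K_1\phi_0=f$, whence $f\perp\ker K_1$, $f=\sum_j c_j e_j$, and $\sum_j c_j^2/\mu_j^2=\norm{\phi_0}^2<\infty$. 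Therefore
\[
\big\langle(\lambda_T^{-1}I+K_1)^{-1}f,\,f\big\rangle=\sum_j\frac{c_j^2}{\lambda_T^{-1}+\mu_j}\ \uparrow\ \sum_j\frac{c_j^2}{\mu_j}=\langle\phi_0,f\rangle=\gamma_{H_1}^{-2}\int_0^1 h_0(u)u^{1/2-H_1}\,du\qquad(T\to\infty)
\]
by monotone convergence; the limit is finite ($=\langle\phi_0,f\rangle$ with $\phi_0,f\in L^2[0,1]$) and strictly positive ($=\norm{K_1^{1/2}\phi_0}^2$, which cannot vanish since $f=K_1^{1/2}(K_1^{1/2}\phi_0)\not\equiv0$). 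Substituting gives $\lim_{T\to\infty}T^{2-2H_2}E_\theta(\widehat\theta_T-\theta)^2=\big(\int_0^1 h_0(u)u^{1/2-H_1}\,du\big)^{-1}$. For strong consistency I would write $\overline N(t):=N(t)-\theta\delta_{H_1}\langle N\rangle(t)$, which under $P_\theta$ is a square-integrable Gaussian $\mathfrak F^X_t$-martingale with bracket $\langle N\rangle$ (the standard structure behind the exponential likelihood \eqref{eq2.25}, cf.\ \cite{CaiChigKlept}), so that $\widehat\theta_T-\theta=\delta_{H_1}^{-1}\,\overline N(T)/\langle N\rangle(T)$; since $\langle N\rangle(T)=\gamma_{H_1}^2\int_0^T h_T(s)s^{1-2H_1}\,ds=\gamma_{H_1}^4\,T^{2-2H_2}\langle(\lambda_T^{-1}I+K_1)^{-1}f,f\rangle\to\infty$ (because $2-2H_2>0$ and the last factor has a positive finite limit), the strong law of large numbers for square-integrable martingales gives $\overline N(T)/\langle N\rangle(T)\to0$, hence $\widehat\theta_T-\theta\to0$, $P_\theta$-a.s.

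The main obstacle is the passage to the limit. Since $K_1$ is compact, $K_1^{-1}$ is unbounded and $\langle(\lambda^{-1}I+K_1)^{-1}f,f\rangle$ would blow up for a generic $f\in L^2[0,1]$; what makes the limit finite and nonzero is precisely that $f(u)=u^{1/2-H_1}$ lies in the range of $K_1$ with an $L^2$ preimage, which is exactly the solvability of the first-kind equation established in Section~3. Granting that input, the convergence reduces to monotone convergence in the spectral expansion, and the remaining work — the scaling bookkeeping in \eqref{eq2.19'} and the martingale strong law — is routine.
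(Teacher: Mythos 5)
Your proposal is correct and reaches the stated limit, but the key limiting step is done by a genuinely different argument than the paper's. Both proofs share the same skeleton: rescale \eqref{eq2.19'} to $[0,1]$ via the homogeneity \eqref{equ2.26'}, express $T^{2-2H_2}E_\theta(\widehat\theta_T-\theta)^2$ through the rescaled solution, use the positivity of the operator with kernel $k_1$ (the paper's computation \eqref{eq2.37}, which you reproduce), and feed in the solvability of the first-kind equation from Lemma \ref{3.1}. Where you diverge is in passing to the limit: the paper argues elementarily, setting $\delta_\mu=h_\mu-h_0$, deriving the two identities $K\delta_\mu+\frac{1}{\mu}\gamma_{H_1}^2h_\mu=0$ and $K\delta_\mu+\frac{1}{\mu}\gamma_{H_1}^2\delta_\mu=-\frac{\gamma_{H_1}^2}{\mu}h_0$, deducing $\|\delta_\mu\|\le\|h_0\|$ and then $\langle\delta_\mu,u^{1/2-H_1}\rangle\to0$; you instead diagonalize the compact positive self-adjoint operator $K_1$ and get $\langle(\lambda_T^{-1}I+K_1)^{-1}f,f\rangle=\sum_j c_j^2/(\lambda_T^{-1}+\mu_j)\uparrow\sum_j c_j^2/\mu_j=\langle\phi_0,f\rangle$ by monotone convergence, the finiteness resting on $f=K_1\phi_0$ with $\phi_0=\gamma_{H_1}^{-2}h_0\in L^2[0,1]$. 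Note that both routes use $\|h_0\|<\infty$, which Lemma \ref{3.1} does not state explicitly, so you are not assuming more than the paper does. Your spectral route buys two extra things: (i) since $K_1\ge0$, the operator $I+\lambda_T K_1$ is invertible for every $T>0$, so the solution $h_T$ exists and is unique for all $T$, which quietly removes the paper's awkward restriction to sequences $T_n$ avoiding eigenvalues (a restriction that the positivity in \eqref{eq2.37} already makes vacuous); and (ii) you actually prove the strong-consistency claim via the martingale strong law applied to $\overline N(T)/\langle N\rangle(T)$ together with $\langle N\rangle(T)\asymp T^{2-2H_2}\to\infty$, whereas the paper's proof addresses only the normalized variance limit and leaves consistency to the citation of \cite{CaiChigKlept}. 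The paper's variational argument, in turn, avoids any appeal to the spectral theorem and gives the same convergence with only Cauchy--Schwarz; the two are of comparable length, and your constants (the $\gamma_{H_1}^2$ factors) are tracked at least as carefully as in the original.
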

\begin{proof} At first we rewrite equation \eqref{eq2.19'} in the equivalent form, changing $u=u'T, s=s'T$ and omitting superscripts:
\begin{equation}\label{eq2.7'}
\gamma_{H_1}^2 u^{\frac{1}{2}- H_1}T^{\frac{1}{2}- H_1}  = \gamma_{H_1}^2 \widetilde{h}_T(uT)
+ T^{2H_2-2H_1}\int_0^1 \widetilde{h}_T(sT) k_1(s,u)ds,
\end{equation}
or
\begin{equation}\label{eq2.33}
\gamma_{H_1}^2 u^{\frac{1}{2}- H_1}  = \gamma_{H_1}^2 \widetilde{h}_T(uT)T^{H_1-\frac{1}{2}}
+ T^{2H_2-2H_1}\int_0^1 \widetilde{h}_T(sT)T^{H_1-\frac{1}{2}} k_1(s,u)ds,
\end{equation}
%According to Theorem \ref{theo2.3}, equation \eqref{eq2.7'} has unique solution $h_T$ such that $h_T(u)u^{\frac12-H_1}\in L_2[0,T]$.
 Denote $\mu=T^{2H_2-2H_1}$. Let $h_\mu(u)=\mu \widetilde{h}_T(uT)T^{H_1-\frac{1}{2}}$. Then, taking into account \eqref{equ2.26'}, equation \eqref{eq2.7'} can be rewritten as
\begin{equation}\label{eq2.8'}
\gamma_{H_1}^2u^{\frac{1}{2}- H_1}   = \frac{1}{\mu}\gamma_{H_1}^2 h_\mu(u)
+ \int_0^1 h_\mu(s) k_1(s,u)ds.
\end{equation}
Note that $$\langle N\rangle(T)=\int_0^Th_T(s)s^{1-2H_1}ds=\int_0^T\widetilde{h}_T(s)s^{\frac12- H_1}ds=T^{2-2H_2}\int_0^1h_\mu(u)u^{\frac12- H_1}du.$$
Define the operator $K$

$$(Kf)(u)=\int_0^1f(s)k_1(s,u)ds$$
and the scalar product $\langle f,g\rangle=\int_0^1 f(s)g(s)ds$, $f,g\in L_2[0,1].$
Then equation \eqref{eq2.8'} can be rewritten as \begin{equation}\label{eq2.32}
\gamma_{H_1}^2u^{\frac{1}{2}- H_1}   = \frac{1}{\mu}\gamma_{H_1}^2 h_\mu(u)
+ Kh_\mu(u).
\end{equation}
Note that \begin{equation}\begin{gathered}
\label{eq2.37}\langle Kf,f\rangle=\int_0^1 (Kf)(t)f(t)dt=\int_0^1(\int_0^1f(s)k_1(t,s)ds) f(t)dt\\=\int_0^1\int_0^1 f(t)t^{H_1-1/2}f(s)s^{H_1-1/2}\int_0^{s\wedge t}\partial_sK_{H_1,H_2}(s,v)\partial_tK_{H_1,H_2}(t,v)dv ds  dt\\=\int_0^1dv\int_v^1\partial_sK_{H_1,H_2}(s,v)f(s)s^{H_1-1/2}ds\int_v^1\partial_tK_{H_1,H_2}(t,v)f(t)t^{H_1-1/2} dt\geq 0.\end{gathered}\end{equation} Introduce corresponding the first type auxiliary integral equation
\begin{equation}\label{eq2.33}
\gamma_{H_1}^2 u^{\frac{1}{2}- H_1}  =  (Kh)(u).
\end{equation}
It follows from Lemma \ref{3.1} that \eqref{eq2.33} has the unique solution, say, $h_0$, obviously, not depending on $\mu$. The function $\delta_\mu=h_\mu-h_0$ satisfies two equations $K\delta_\mu+\frac{1}{\mu}\gamma_{H_1}^2 h_\mu =0 $ and $K\delta_\mu+\frac{1}{\mu}\gamma_{H_1}^2 \delta_\mu=-\frac{h_0}{\mu}.$
Multiplying the 2nd equation by  $\delta_\mu$ and integrating, we get \begin{equation}\label{eq2.38}\langle K\delta_\mu,\delta_\mu\rangle+\frac{1}{\mu}\gamma_{H_1}^2 \|\delta_\mu\|^2=\frac{1}{\mu}|\langle {h_0},\delta_\mu\rangle|,\end{equation}
and it follows from \eqref{eq2.38} and \eqref{eq2.37} that $\gamma_{H_1}^2\|\delta_\mu\|^2\leq |\langle {h_0},\delta_\mu\rangle|\leq \|h_0\|\|\delta_\mu\|,$ which implies that
$\|\delta_\mu\|\leq \|h_0\|.$ Multiplying the 1st equation by $h_0$ and integrating we get
$$\langle K\delta_\mu, h_0\rangle+\frac{1}{\mu}\gamma_{H_1}^2 \langle h_\mu,h_0\rangle =0.$$

Note that inequality $\|\delta_\mu\|\leq \|h_0\|$ implies that $$|\langle h_\mu,h_0\rangle|\leq |\langle \delta_\mu,h_0\rangle|+\|h_0\|^2\leq 2\|h_0\|^2<\infty,$$
and hence \begin{equation} \gamma_{H_1}^2|\langle \delta_\mu, u^{\frac12-H_1}\rangle|=|\langle\delta_\mu,Kh_0\rangle|=|\langle K\delta_\mu,h_0\rangle|=\frac{1}{\mu}\gamma_{H_1}^2 |\langle h_\mu,h_0\rangle|\rightarrow 0
\end{equation}
as $T\rightarrow \infty.$ It means that $\lim_{T\rightarrow \infty}\int_0^1h_\mu(u)u^{\frac12-H_1}du=\int_0^1h_0(u)u^{\frac12-H_1}du.$
Therefore $$T^{2-2H_2}E_\theta(\widehat{\theta}_T-\theta)^2= \frac{1}{\int_0^1h_\mu(u)u^{\frac12-H_1}du}\rightarrow \frac{1}{\int_0^1h_0(u)u^{\frac12-H_1}du},$$
whence the proof follows.
\end{proof}
\begin{remark}
In outline,  our method of proof follows the method of the corresponding result from \cite{CaiChigKlept}, however Lemma \ref{3.1} is specific to our case.
\end{remark}
\section{Appendix}
We recall some notions from fractional calculus. For the details see \cite{SamkoKM}. Fractional integrals are defined as $$(I_{a+}^\alpha f)(x)=\frac{1}{\Gamma(\alpha)}\int_a^xf(t)(x-t)^{\alpha-1}dt$$
and $$(I_{b-}^\alpha f)(x)=\frac{1}{\Gamma(\alpha)}\int_x^bf(t)(t-x)^{\alpha-1}dt,$$
while fractional derivatives are defined as $$(\mathcal{D}_{a+}^\alpha f)(x)=\frac{1}{\Gamma(1-\alpha)}\frac{d}{dx}\int_a^xf(t)(x-t)^{-\alpha}dt$$
and $$(\mathcal{D}_{b-}^\alpha f)(x)=-\frac{1}{\Gamma(1-\alpha)}\frac{d}{dx}\int_x^bf(t)(t-x)^{-\alpha}dt.$$
Fractional differentiation and integration are inverse operators on the appropriate functional classes. Also, we shall use   the following integration by parts formula for fractional derivatives,
$$\int_a^b(\mathcal{D}_{a+}^\alpha f)(x)g(x)dx=\int_a^b f(x)(\mathcal{D}_{b-}^\alpha g)(x)dx.$$
\begin{lemma}\label{3.1}
For any constant $C>0$ integral equation
\begin{equation}\label{eqA.1}
u^{1/2-H_1}=C(Kh)(u), \;u\in(0,1]
\end{equation}
of the 1st kind has the unique solution.
\end{lemma}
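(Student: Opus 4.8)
The plan is to reduce \eqref{eqA.1} to an explicit chain of Abel-type equations, each of which is inverted by a fractional derivative. First set $g(s)=h(s)s^{H_1-1/2}$ and multiply \eqref{eqA.1} by $u^{H_1-1/2}$; recalling that $k_1(s,u)=s^{H_1-1/2}u^{H_1-1/2}k(s,u)$ and the definition \eqref{eq2.8} of $k$, Fubini's theorem turns \eqref{eqA.1} into
\begin{equation*}
u^{1-2H_1}=C\int_0^u\partial_u K_{H_1,H_2}(u,v)\,\psi(v)\,dv,\qquad
\psi(v):=\int_v^1 g(s)\,\partial_s K_{H_1,H_2}(s,v)\,ds .
\end{equation*}
Since $K_{H_1,H_2}(u,u)=0$, the right-hand side equals $C\frac{d}{du}\int_0^u K_{H_1,H_2}(u,v)\psi(v)\,dv$, so, integrating in $u$ (the constant of integration vanishes, both sides being $0$ at $u=0$), \eqref{eqA.1} is equivalent to the first-kind equation
\begin{equation*}
\frac{u^{2-2H_1}}{2-2H_1}=C\int_0^u K_{H_1,H_2}(u,v)\,\psi(v)\,dv,\qquad u\in(0,1]. \tag{$\star$}
\end{equation*}

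Next I would use the factorization of $K_{H_1,H_2}$ through fractional integrals. From \eqref{kernelasitis}, $K_{H_1,H_2}(u,v)=\beta_{H_2}v^{1/2-H_2}\int_v^u(u-r)^{1/2-H_1}r^{H_2-H_1}(r-v)^{H_2-3/2}dr$, so interchanging the order of integration in $(\star)$ writes its right-hand side, up to an explicit constant, as $\bigl(I_{0+}^{3/2-H_1}\bigl[r^{H_2-H_1}I_{0+}^{H_2-1/2}[v^{1/2-H_2}\psi(v)]\bigr]\bigr)(u)$. Now apply, successively: $\mathcal{D}_{0+}^{3/2-H_1}$ together with the power rule $\mathcal{D}_{0+}^{\alpha}t^{\beta}=\frac{\Gamma(\beta+1)}{\Gamma(\beta+1-\alpha)}t^{\beta-\alpha}$ (legitimate here because the orders $3/2-H_1,\,H_2-1/2,\,H_1-1/2$ all lie in $(0,1)$ and the exponents that occur all exceed $-1$); division by $u^{H_2-H_1}$; $\mathcal{D}_{0+}^{H_2-1/2}$ and the power rule once more; and division by $v^{1/2-H_2}$. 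Since $\mathcal{D}_{0+}^{\alpha}$ inverts $I_{0+}^{\alpha}$ on the relevant classes, every step is forced, and the upshot is that $(\star)$ is solvable if and only if $\psi(v)=c_1\,v^{1/2-H_2}$ for one explicit constant $c_1=c_1(C,H_1,H_2)$.

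It then remains to recover $g$ (hence $h$) from $K^{*}_{H_1,H_2}g=c_1\,\cdot^{1/2-H_2}$, i.e.\ to invert $K^{*}_{H_1,H_2}$. Writing $K_{H_1,H_2}(s,v)=\int_v^s(s-\tau)^{1/2-H_1}\tau^{1/2-H_1}\partial_\tau K_{H_2}(\tau,v)\,d\tau$ and using $\partial_\tau K_{H_2}(\tau,v)=\beta_{H_2}(\tau/v)^{H_2-1/2}(\tau-v)^{H_2-3/2}$ gives $\partial_s K_{H_1,H_2}(s,v)=\Gamma(3/2-H_1)\bigl(\mathcal{D}_{v+}^{H_1-1/2}\bigl[\tau^{1/2-H_1}\partial_\tau K_{H_2}(\tau,v)\bigr]\bigr)(s)$; plugging this into the definition of $K^{*}_{H_1,H_2}g$ and using the integration-by-parts formula for fractional derivatives from the Appendix converts the equation into the right-sided Abel equation
\begin{equation*}
\Gamma(3/2-H_1)\beta_{H_2}\Gamma(H_2-1/2)\bigl(I_{1-}^{H_2-1/2}\bigl[s^{H_2-H_1}(\mathcal{D}_{1-}^{H_1-1/2}g)(s)\bigr]\bigr)(v)=c_1 .
\end{equation*}
Applying $\mathcal{D}_{1-}^{H_2-1/2}$ (with $\mathcal{D}_{1-}^{\alpha}1=(1-v)^{-\alpha}/\Gamma(1-\alpha)$), dividing by $v^{H_2-H_1}$, and then applying $I_{1-}^{H_1-1/2}$ yields the unique solution
\begin{equation*}
g(v)=\frac{c_2}{\Gamma(H_1-1/2)}\int_v^1(s-v)^{H_1-3/2}s^{H_1-H_2}(1-s)^{1/2-H_2}\,ds,
\end{equation*}
with $c_2$ explicit, whence $h(v)=v^{1/2-H_1}g(v)$ is the unique solution of \eqref{eqA.1}. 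Finally one checks that this integral converges for every $v\in(0,1]$ (the exponents $H_1-3/2$ and $1/2-H_2$ exceed $-1$) and that $g\in L_2[0,1]$: near $v=0$ one has $g(v)=O\!\bigl(v^{(2H_1-H_2-1/2)\wedge0}\bigr)$, and $2(2H_1-H_2)>0$ because $H_1>\tfrac12$, so $h(\cdot)\,\cdot^{1/2-H_1}\in L_2[0,1]$, as needed.

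The idea is straightforward and, once every reduction above is seen to be an equivalence, uniqueness comes for free; the work is in the bookkeeping. The main obstacles are: justifying the two interchanges of the order of integration and, more delicately, the identity $\int_0^u\partial_u K_{H_1,H_2}(u,v)\psi(v)\,dv=\frac{d}{du}\int_0^u K_{H_1,H_2}(u,v)\psi(v)\,dv$ in the presence of the singularity $(u-v)^{H_2-H_1-1}$ (so one really works with $\int_0^{u-\eps}$ and lets $\eps\to0$); verifying that the $L_2$-function $g$ produced at the end actually belongs to the domains on which the fractional-calculus identities (power rule, semigroup property, integration by parts) are being applied, given that $g$ is merely in $L_2$ and not bounded; and confirming that $h$ solves \eqref{eqA.1} itself, not merely its once-integrated form $(\star)$.
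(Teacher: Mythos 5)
Your proposal is correct and takes essentially the same route as the paper's own proof: integrate once to the first-kind equation with kernel $K_{H_1,H_2}$, factor that kernel through left-sided fractional integrals to force $\psi(v)=c_1v^{1/2-H_2}$, then invert the remaining equation via the fractional integration-by-parts formula and right-sided operators, ending with the same explicit solution (your exponent $(1-s)^{1/2-H_2}$ is in fact the correct one; the paper's $(1-\cdot)^{1/2-H_1}$ there is a typo). The only slip is verbal: to pass from \eqref{eqA.1} to your displayed equation for $u^{1-2H_1}$ one multiplies by $u^{1/2-H_1}$, not $u^{H_1-1/2}$; the displayed equation itself is right.
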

\begin{proof}
We can present equation \eqref{eqA.1} in  equivalent form  $$u^{1/2-H_1}=C\int_0^1h(s)k_1(s,u)ds), \;u\in(0,1],$$
or $$u^{1-2H_1}=C\int_0^1\breve{h}(s)\int_0^{s\wedge u}\partial_sK_{H_1,H_2}(s,v)\partial_uK_{H_1,H_2}(u,v)dv ds, \;u\in(0,1],$$
where $\breve{h}(s)={h}(s)s^{1/2-H_1}$, or, at last,
\begin{equation}\label{eqA.2}u^{1-2H_1}=C\int_0^u\bigg(\int_v^1\breve{h}(s)
\partial_sK_{H_1,H_2}(s,v)ds\bigg)\partial_uK_{H_1,H_2}(u,v)dv.\end{equation}
Now, taking into account the transition from equation \eqref{main1} to \eqref{eq2.7} with the help of differentiation, we can perform the inverse operation and get from \eqref{eqA.2} the following equivalent equation
\begin{equation}\label{eqA.3}u^{2-2H_1}
=C(2-2H_1)\int_0^uK_{H_1,H_2}(u,v)\bigg(\int_v^1\breve{h}(s)\partial_sK_{H_1,H_2}(s,v)ds\bigg)dv, u\in[0,1].\end{equation}
The right-hand side of equation \eqref{eqA.3} can be rewritten as
$$C(2-2H_1)\int_0^uK_{H_1,H_2}(u,v)q(v)dv,$$
where $q(v)=\int_v^1\breve{h}(s)\partial_sK_{H_1,H_2}(s,v)ds.$
At first, solve the equation $$u^{2-2H_1}=C_1\int_0^uK_{H_1,H_2}(u,v)q(v)dv,$$
with $C_1=C(2-2H_1)$. Taking into account \eqref{kernelasitis}, the latter equation can be rewritten in   equivalent form
$$u^{2-2H_1}=C_1\beta_{H_2}\int_0^uv^{1/2-H_2}\int_v^u (u-z)^{1/2-H_1}z^{H_2-H_1}(z-v)^{H_2-3/2}dz q(v)dv,$$
or $$u^{2-2H_1}=C_1\beta_{H_2}\int_0^uz^{H_2-H_1}(u-z)^{1/2-H_1}\int_0^zv^{1/2-H_2}(z-v)^{H_2-3/2}q(v)dv dz,$$
or, at last,
$$u^{2-2H_1}=C_2(I_{0+}^{3/2-H_1}p)(u),$$
where $C_2=C_1\beta_{H_2}\Gamma(3/2-H_2)$ and \begin{equation}\label{A.4}
p(z)=z^{H_2-H_1}\int_0^zv^{1/2-H_2}(z-v)^{H_2-3/2}q(v)dv. \end{equation}
It means that \begin{equation}\begin{gathered}\label{A.5}
p(u)=C_2^{-1}(\mathcal{D}^{3/2-H_1}_{0+}(\cdot^{2-2H_1}))(u)\\=(C_2\Gamma(H_1-1/2))^{-1}
\Big(\int_0^u(u-t)^{H_1-3/2}t^{2-2H_1}dt\Big)'_u=C_3u^{1/2-H_1},\end{gathered}\end{equation}
where $C_3=\frac{(3/2-H_1)B(H_1-1/2,3-2H_1)}{C_2\Gamma(H_1-1/2)}$. Furthermore, comparing \eqref{A.4} and \eqref{A.5},
we get that $$C_3z^{1/2-H_2}=\int_0^z v^{1/2-H_2}(z-v)^{H_2-3/2}q(v)dv=\Gamma(H_2-1/2)(I_{0+}^{H_2-1/2}(\cdot^{1/2-H_2}q))(z),$$
whence \begin{equation*}\begin{gathered}v^{1/2-H_2}q(v)=C_3(\Gamma(H_2-1/2))^{-1}
(\mathcal{D}^{H_2-1/2}_{0+}\cdot^{1/2-H_2})(v)\\
=C_3(\Gamma(H_2-1/2)\Gamma(3/2-H_2))^{-1}\Big(\int_0^v(v-t)^{1/2-H_2}t^{1/2-H_2}dt\Big)'_v\\=
C_4v^{1-2H_2},\end{gathered}\end{equation*}
where $C_4=C_3(2-2H_2)(\Gamma(H_2-1/2)\Gamma(3/2-H_2))^{-1}$.
Obviously, $q(v)=C_4v^{1/2-H_2}$,   and we arrive at the equation \begin{equation}\label{A.6}C_4v^{1/2-H_2}=\int_v^1\breve{h}(s)\partial_sK_{H_1,H_2}(s,v)ds.\end{equation}
Note that $$\partial_sK_{H_1,H_2}(s,v)=\beta_{H_2}\Gamma(3/2-H_1)v^{1/2-H_2}\Big(\mathcal{D}^{H_1-1/2}_{v+}
\Big(\cdot^{H_2-H_1}(\cdot-v)^{H_2-3/2}\Big)\Big)(s),$$
so, with the help of integration by parts formula, equation \eqref{A.6} can be rewritten as
\begin{equation}\begin{gathered}\label{A.7}C_5=\int_v^1\breve{h}(s)\Big(\mathcal{D}^{H_1-1/2}_{v+}
\Big(\cdot^{H_2-H_1}(\cdot-v)^{H_2-3/2}\Big)\Big)(s)ds\\=\int_v^1(\mathcal{D}^{H_1-1/2}_{1-}\breve{h})(s)s^{H_2-H_1}
(s-v)^{H_2-3/2} ds\\=(\Gamma(H_2-1/2))^{-1}(I_{1-}^{H_2-1/2}(\mathcal{D}^{H_1-1/2}_{1-}\breve{h}) \cdot^{H_2-H_1})(v),\end{gathered}\end{equation}
where $C_5=C_4(\beta_{H_2}\Gamma(3/2-H_1))^{-1}.$ The latter equation means that
$$(\mathcal{D}^{H_1-1/2}_{1-}\breve{h})(v) v^{H_2-H_1}=C_6(1-v)^{1/2-H_1}, $$
$C_6=\frac{C_5\Gamma(H_1-1/2)}{\Gamma(3/2-H_1)}.$ At last, we get that
 $$ {h} (v)=v^{H_1-1/2}\breve{h}(v)=C_6v^{H_1-1/2}(I^{H_1-1/2}_{1-}(\cdot^{H_1-H_2}(1-\cdot)^{1/2-H_1})(v),$$
 and this solution of equation \eqref{eqA.1} is unique.\end{proof}

 \begin{acknowledgement}
 My thanks to  M. Kleptsyna for the statement of the problem and to M. Kleptsyna and G. Shevchenko for helpful discussions.
 \end{acknowledgement}


\begin{thebibliography}{20}

 \bibitem {betotu} Bertin K.,  Torres S.,  Tudor C.: Drift parameter estimation in fractional diffusions driven by perturbed
random walks. Statistics \& Probability Letters, \textbf{81}, 243–249 (2011)
\bibitem{BiHuOksZhang} F. Biagini, Y. Hu, B. {\O}ksendal, T. Zhang Stochastic Calculus for Fractional Brownian Motion and Applications. Springer-Verlag, 2008.
\bibitem {bishwal} Bishwal, J. P. N.: Parameter estimation in stochastic differential equations
    \ Springer, Lecture Notes Math., \textbf{1923} (2008)
\bibitem{CaiChigKlept} C. Cai, P. Chigansky, M. Kleptsyna Mixed fractional  Brownian motion:
the filtering perspective.  arXiv:1208.6253v3 [math.PR] 23 September 2013
\bibitem{Decr} L. Decreusefond, A.S. $\ddot{U}$stunel Stochastic analysis of fractional Brownian motion. Potential analysis (10)1999, 177-214.

 \bibitem{HuNu}  Hu, Y.,  Nualart, D.: Parameter estimation for fractional Ornstein-Uhlenbeck processes.
 Statistics \& Probability Letters, \textbf{8}, 1030-1038 (2010)


     \bibitem{Jost} C. Jost Transformation formulas for fractional Brownian motion. Stoch. Processes and Appl. 116(2006), 1341-1357.
\bibitem{Mi} Yu. Mishura Stochastic calculus for fractional Brownian motion and related process. Lecture Notes Math., 1929, Springer-Verlag, 2008.
    \bibitem{NVV} I. Norros, E. Valkeila, J. Virtamo An elementary approach to a Girsanov formula and other analytical results on fractional Brownian motions. Bernoulli, 5(4)(1999), 571-587.

 \bibitem{Nu} D. Nualart   Stochastic integration with respect to fractional
Brownian motion and applications. Contemporary Mathematics, 336 (2003), 3–39.

\bibitem {prakasa rao} Prakasa Rao, B. L. S.: Statistical inference for fractional diffusion processes.
John Wiley \& Sons  (2010)
\bibitem{SamkoKM} S.G. Samko, A.A. Kilbas and O.I. Marichev Fractional integrals and derivatives: Theory and Applications. Gordon and Breach Science Publishers, London, 1993.






\end{thebibliography}
\end{document}